\newtheorem{theorem}{Theorem}[section]
\newtheorem{lemma}[theorem]{Lemma}
\newtheorem{corollary}[theorem]{Corollary}
\begin{document}
\title[\textbf{Spectral properties of a Sturm-Liouville problem}]{\textbf{%
Asymptotic properties of eigenvalues and eigenfunctions of a Sturm-Liouville
problem with discontinuous weight function}}
\author{\textbf{Erdo\u{g}an \c{S}en}}
\address{\textbf{Department of Mathematics, Faculty of Arts and Science,
Namik Kemal University, 59030, Tekirda\u{g}, Turkey and Department of
Mathematics Engineering, Istanbul Technical University, Maslak, 34469
Istanbul, Turkey}}
\email{\textbf{erdogan.math@gmail.com}}

\begin{abstract}
In this paper, by using the similar methods of \ [O. Sh. Mukhtarov and M.
Kadakal, Some spectral properties of one Sturm-Liouville type problem with
discontinuous weight, \textit{Siberian Mathematical Journal}, 46 (2005)
681-694] we extend some spectral properties of regular Sturm-Liouville
problems to those which consist of a Sturm-Liouville equation with
discontinuous weight at two interior points together with spectral
parameter-dependent boundary conditions. We give an operator-theoretic
formulation for the considered problem and obtain asymptotic formulas for
the eigenvalues and eigenfunctions.

\vspace{2mm}\noindent \textsc{2010 Mathematics Subject Classification.}
34L20, 35R10.

\vspace{2mm}

\noindent \textsc{Keywords and phrases.} Sturm-Liouville problem;
eigenparameter; transmission conditions; asymptotics of eigenvalues and
eigenfunctions.
\end{abstract}

\thanks{}
\maketitle

%%%%%%%%%%%%%%%%%%%%%%%%%%%%%%%%%%%%%%%%%%%%%%%%%%%%%%%%%%%%%%%%%%%

%%%%%%%%%%%%%%%%%%%%%%%%%%%%%%%%%%%%%%%%%%%%%%%%%%%%%%%%%%%%%%%%%%%

%%%%%%%%%%%%%%%%%%%%%%%%%%%%%%%%%%%%%%%%%%%%%%%%%%%%%%%%%%%%%%%%%%%

\section{\textbf{Introduction}}

%%%%%%%%%%%%%%%%%%%%%%%%%%%%%%%%%%%%%%%%%%%%%%%%%%%%%%%%%%%%%%%%%%%

Sturmian theory is one of the most extensively developing fields in
theoretical and applied mathematics \ The literature is voluminous and we
refer to [1-14]. The theory of discontinuous Sturm-Liouville type problems
mainly has been developed by Mukhtarov and his students (see [1-12]).
Particularly, there has been an increasing interest in the spectral analysis
of boundary-value problems with eigenvalue-dependent boundary conditions
[1-12,15-18,21,23,24].

In this paper we consider the boundary value problem for the differential
equation%
\begin{equation}
\tau u:=-u^{\prime \prime }+q(x)u=\lambda \omega (x)u  \tag{1.1}
\end{equation}%
for $x\in \left[ -1,h_{1}\right) \cup \left( h_{1},h_{2}\right) \cup \left(
h_{2},1\right] $ (i.e., $x$ belongs to $\left[ -1,1\right] $ but the two
inner points $x=h_{1}$ and $x=h_{2}$), where $q(x)$ is a real valued
function, continuous in $\left[ -1,h_{1}\right) $, $\left(
h_{1},h_{2}\right) $ and $\left( h_{2},1\right] $ with the finite limits $%
q\left( \pm h_{1}\right) =\lim_{x\rightarrow \pm h_{1}}$, $q\left( \pm
h_{2}\right) =\lim_{x\rightarrow \pm h_{2}}$; $\omega \left( x\right) $ is a
discontinuous weight function such that $\omega \left( x\right) =\omega
_{1}^{2}$ for $x\in \left[ -1,h_{1}\right) $, $\omega \left( x\right)
=\omega _{2}^{2}$ for $x\in \left( h_{1},h_{2}\right) $ and $\omega \left(
x\right) =\omega _{3}^{2}$ for $x\in \left( h_{2},1\right] $, $\omega >0$
together with the standart boundary condition at $x=-1$%
\begin{equation}
L_{1}u:=\cos \alpha u\left( -1\right) +\sin \alpha u^{\prime }\left(
-1\right) =0,  \tag{1.2}  \label{equation 2}
\end{equation}%
the spectral parameter dependent boundary condition at $x=1$%
\begin{equation}
L_{2}u:=\lambda \left( \beta _{1}^{\prime }u\left( 1\right) -\beta
_{2}^{\prime }u^{\prime }\left( 1\right) \right) +\left( \beta _{1}u\left(
1\right) -\beta _{2}u^{\prime }\left( 1\right) \right) =0,  \tag{1.3}
\label{equation 3}
\end{equation}%
and the four transmission conditions at the points of discontinuity $x=h_{1}$
and $x=h_{2}$%
\begin{equation}
L_{3}u:=\gamma _{1}u\left( h_{1}-0\right) -\delta _{1}u\left( h_{1}+0\right)
=0,  \tag{1.4}  \label{equation 4}
\end{equation}%
\begin{equation}
L_{4}u:=\gamma _{2}u^{\prime }\left( h_{1}-0\right) -\delta _{2}u^{\prime
}\left( h_{1}+0\right) =0,  \tag{1.5}
\end{equation}%
\begin{equation}
L_{5}u:=\gamma _{3}u\left( h_{2}-0\right) -\delta _{3}u\left( h_{2}+0\right)
=0,  \tag{1.6}
\end{equation}%
\begin{equation}
L_{6}u:=\gamma _{4}u^{\prime }\left( h_{2}-0\right) -\delta _{4}u^{\prime
}\left( h_{2}+0\right) =0,  \tag{1.7}
\end{equation}%
in the Hilbert space $L_{2}\left( -1,h_{1}\right) \oplus L_{2}\left(
h_{1},h_{2}\right) \oplus L_{2}\left( h_{2},1\right) $ where $\lambda \in
%TCIMACRO{\U{2102} }%
%BeginExpansion
\mathbb{C}
%EndExpansion
$ is a complex spectral parameter; and all coefficients of the boundary and
transmission conditions are real constants. We assume naturally that $%
\left\vert \alpha _{1}\right\vert +\left\vert \alpha _{2}\right\vert \neq 0$%
, $\left\vert \beta _{1}^{\prime }\right\vert +\left\vert \beta _{2}^{\prime
}\right\vert \neq 0$ and $\left\vert \beta _{1}\right\vert +\left\vert \beta
_{2}\right\vert \neq 0$. Moreover, we will assume that $\rho :=\beta
_{1}^{\prime }\beta _{2}-\beta _{1}\beta _{2}^{\prime }>0$. Some special
cases of this problem arises after application of the method of speration of
variables to the diverse assortment of physical problems, heat and mass
transfer problems (for example, see [22]), vibrating string problems when
the string loaded additionally with point masses (for example, see [22]).

\section{\textbf{Operator-Theoretic Formulation of the Problem}}
In the series of O. Sh. Mukhtarov and his students works are
introduced direct sum of Hilbert spaces but with the usual inner
products replaced by appropriate multiplies  (see, for example,
[1-3,5,6,10-12].
By employing the approach used in these words, we introduce a special inner product in the Hilbert space $%
\left( L_{2}\left( -1,h_{1}\right) \oplus L_{2}\left( h_{1},h_{2}\right)
\oplus L_{2}\left( h_{2},1\right) \right) \oplus
%TCIMACRO{\U{2102} }%
%BeginExpansion
\mathbb{C}
%EndExpansion
$ and define a linear operator $A$ in it so that the problem (1.1)-(1.5) can
be interpreted as the eigenvalue problem for $A$. To this end, we define a
new Hilbert space inner product on $H:=\left( L_{2}\left( -1,h_{1}\right)
\oplus L_{2}\left( h_{1},h_{2}\right) \oplus L_{2}\left( h_{2},1\right)
\right) \oplus
%TCIMACRO{\U{2102} }%
%BeginExpansion
\mathbb{C}
%EndExpansion
$ by
\begin{eqnarray*}
\left\langle F,G\right\rangle _{H} &=&\omega _{1}^{2}\int_{-1}^{h_{1}}f(x)%
\overline{g(x)}dx+\omega _{2}^{2}\frac{\delta _{1}\delta _{2}}{\gamma
_{1}\gamma _{2}}\int_{h_{1}}^{h_{2}}f(x)\overline{g(x)}dx \\
&&+\omega _{3}^{2}\frac{\delta _{1}\delta _{2}\delta _{3}\delta _{4}}{\gamma
_{1}\gamma _{2}\gamma _{3}\gamma _{4}}\int_{h_{2}}^{1}f(x)\overline{g(x)}dx+%
\frac{\delta _{1}\delta _{2}\delta _{3}\delta _{4}}{\rho \gamma _{1}\gamma
_{2}\gamma _{3}\gamma _{4}}f_{1}\overline{g_{1}}
\end{eqnarray*}%
for $F=\left(
\begin{array}{c}
f(x) \\
f_{1}%
\end{array}%
\right) $ and $G=\left(
\begin{array}{c}
g(x) \\
g_{1}%
\end{array}%
\right) \in H$. For convenience we will use the notations
\begin{equation*}
R_{1}\left( u\right) :=\beta _{1}u(1)-\beta _{2}u^{\prime }(1),\text{ \ }%
R_{1}^{\prime }\left( u\right) :=\beta _{1}^{\prime }u(1)-\beta _{2}^{\prime
}u^{\prime }(1).
\end{equation*}%
In this Hilbert space we construct the operator $A:H\rightarrow H$ with
domain%
\begin{eqnarray}
D(A) &=&\left\{ F=\left(
\begin{array}{c}
f(x) \\
f_{1}%
\end{array}%
\right) \mid f(x),f^{\prime }(x)\text{ are absolutely continuous in }\left[
1,h_{1}\right] \cup \left[ h_{1},h_{2}\right] \right.  \notag \\
&&\cup \left[ h_{2},1\right] \text{; and has finite limits }f(h_{1}\pm 0),%
\text{ }f(h_{2}\pm 0),f^{\prime }(h_{1}\pm 0),f^{\prime }(h_{2}\pm 0);
\notag \\
\tau f &\in &L_{2}\left( -1,h_{1}\right) \oplus L_{2}\left(
h_{1},h_{2}\right) \oplus L_{2}\left( h_{2},1\right) ;\text{ }%
L_{1}f=L_{3}f=L_{4}f=L_{5}f=L_{6}f=0,  \notag \\
&&\left. f_{1}=R_{1}^{\prime }(f)\right\}  \TCItag{2.1}
\end{eqnarray}%
which acts by the rule%
\begin{equation}
AF=\left(
\begin{array}{c}
\frac{1}{\omega \left( x\right) }\left[ -f^{\prime \prime }+q(x)f\right] \\
-R_{1}(f)%
\end{array}%
\right) \text{ \ \ \ with \ }F=\left(
\begin{array}{c}
f(x) \\
R_{1}^{\prime }(f)%
\end{array}%
\right) \in D(A).  \tag{2.2}
\end{equation}%
Thus we can pose the boundary-value-transmission problem (1.1)-(1.7) in $H$
as
\begin{equation}
AU=\lambda U,\text{ \ \ \ }U:=\left(
\begin{array}{c}
u(x) \\
R_{1}^{\prime }(u)%
\end{array}%
\right) \in D(A).  \tag{2.3}
\end{equation}%
It is readily verified that the eigenvalues of $A$ coincide with those of
the problem (1.1)-(1.7).

\begin{theorem}
The operator $A$ is symmetric.
\end{theorem}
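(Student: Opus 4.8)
The plan is to verify the defining identity $\langle AF,G\rangle _{H}=\langle F,AG\rangle _{H}$ for all $F=\left(f,f_{1}\right)^{T}$ and $G=\left(g,g_{1}\right)^{T}$ in $D(A)$ (density of $D(A)$ in $H$ being standard), and the whole argument will hinge on Lagrange's identity together with the boundary and transmission conditions encoded in $D(A)$. First I would compute the difference $\langle AF,G\rangle _{H}-\langle F,AG\rangle _{H}$. Since the first component of $AF$ is $\omega (x)^{-1}\tau f$ and $\omega (x)$ equals $\omega _{1}^{2},\omega _{2}^{2},\omega _{3}^{2}$ on the three subintervals, the factors $\omega _{i}^{2}$ in the inner product cancel against $\omega (x)^{-1}$, leaving
\[
\langle AF,G\rangle _{H}-\langle F,AG\rangle _{H}=\int_{-1}^{h_{1}}\!\!\big(\tau f\,\overline{g}-f\,\overline{\tau g}\big)dx+\frac{\delta _{1}\delta _{2}}{\gamma _{1}\gamma _{2}}\int_{h_{1}}^{h_{2}}\!\!\big(\tau f\,\overline{g}-f\,\overline{\tau g}\big)dx+\frac{\delta _{1}\delta _{2}\delta _{3}\delta _{4}}{\gamma _{1}\gamma _{2}\gamma _{3}\gamma _{4}}\int_{h_{2}}^{1}\!\!\big(\tau f\,\overline{g}-f\,\overline{\tau g}\big)dx+T,
\]
where $T:=-\dfrac{\delta _{1}\delta _{2}\delta _{3}\delta _{4}}{\rho \gamma _{1}\gamma _{2}\gamma _{3}\gamma _{4}}\big(R_{1}(f)\overline{R_{1}^{\prime }(g)}-R_{1}^{\prime }(f)\overline{R_{1}(g)}\big)$ is the contribution of the $\mathbb{C}$-component, obtained from $f_{1}=R_{1}^{\prime }(f)$, $g_{1}=R_{1}^{\prime }(g)$ and the second component $-R_{1}(f)$ of $AF$.

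Next I would introduce the Lagrange bracket $\left[ f,\overline{g}\right] (x):=f(x)\overline{g^{\prime }(x)}-f^{\prime }(x)\overline{g(x)}$. A direct computation with $\tau u=-u^{\prime \prime }+qu$ gives $\tau f\,\overline{g}-f\,\overline{\tau g}=\frac{d}{dx}\left[ f,\overline{g}\right] $, so that on each subinterval $\int_{a}^{b}\!\big(\tau f\,\overline{g}-f\,\overline{\tau g}\big)dx=\left[ f,\overline{g}\right] (b)-\left[ f,\overline{g}\right] (a)$. Evaluating the three integrals and collecting the boundary values at $-1$, $h_{1}\pm 0$, $h_{2}\pm 0$ and $1$ turns the integral part into a finite sum of bracket terms weighted by the transmission-coefficient ratios.

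I would then reduce the $\mathbb{C}$-term $T$ to a bracket at $x=1$. Expanding $R_{1}(u)=\beta _{1}u(1)-\beta _{2}u^{\prime }(1)$ and $R_{1}^{\prime }(u)=\beta _{1}^{\prime }u(1)-\beta _{2}^{\prime }u^{\prime }(1)$, using that all coefficients are real and that $\rho =\beta _{1}^{\prime }\beta _{2}-\beta _{1}\beta _{2}^{\prime }$, the cross terms collapse to $R_{1}(f)\overline{R_{1}^{\prime }(g)}-R_{1}^{\prime }(f)\overline{R_{1}(g)}=\rho \left[ f,\overline{g}\right] (1)$, whence $T=-\dfrac{\delta _{1}\delta _{2}\delta _{3}\delta _{4}}{\gamma _{1}\gamma _{2}\gamma _{3}\gamma _{4}}\left[ f,\overline{g}\right] (1)$. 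This is precisely what is needed to cancel the $\left[ f,\overline{g}\right] (1)$ produced by the third integral.

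Finally I would dispose of the remaining interior and left-endpoint terms. At $x=-1$ the condition $L_{1}f=L_{1}g=0$ with $|\cos \alpha |+|\sin \alpha |\neq 0$ forces $\big(f(-1),f^{\prime }(-1)\big)$ and $\big(\overline{g(-1)},\overline{g^{\prime }(-1)}\big)$ to be proportional, so $\left[ f,\overline{g}\right] (-1)=0$. The transmission conditions $L_{3}f=L_{4}f=0$ give $f(h_{1}-0)=\tfrac{\delta _{1}}{\gamma _{1}}f(h_{1}+0)$ and $f^{\prime }(h_{1}-0)=\tfrac{\delta _{2}}{\gamma _{2}}f^{\prime }(h_{1}+0)$ (and likewise for $g$), yielding $\left[ f,\overline{g}\right] (h_{1}-0)=\tfrac{\delta _{1}\delta _{2}}{\gamma _{1}\gamma _{2}}\left[ f,\overline{g}\right] (h_{1}+0)$; similarly $L_{5}f=L_{6}f=0$ give $\left[ f,\overline{g}\right] (h_{2}-0)=\tfrac{\delta _{3}\delta _{4}}{\gamma _{3}\gamma _{4}}\left[ f,\overline{g}\right] (h_{2}+0)$. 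Substituting these into the collected boundary sum makes every term cancel, so $\langle AF,G\rangle _{H}=\langle F,AG\rangle _{H}$. The main point to watch is the bookkeeping of the coefficient ratios: the cancellation at $h_{1}$ and $h_{2}$ works only because the weights $\tfrac{\delta _{1}\delta _{2}}{\gamma _{1}\gamma _{2}}$ and $\tfrac{\delta _{1}\delta _{2}\delta _{3}\delta _{4}}{\gamma _{1}\gamma _{2}\gamma _{3}\gamma _{4}}$ chosen in the inner product exactly match the jumps forced by the transmission conditions, and it is this matching — rather than any single computation — that is the crux of the proof.
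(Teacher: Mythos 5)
Your proof is correct and takes essentially the same route as the paper's: twice integrating by parts (the Lagrange identity) reduces $\left\langle AF,G\right\rangle _{H}-\left\langle F,AG\right\rangle _{H}$ to weighted Wronskian terms at $-1$, $h_{1}\pm 0$, $h_{2}\pm 0$ and $1$, which are then cancelled using the boundary condition at $-1$, the transmission conditions, and the identity $R_{1}^{\prime }(f)R_{1}(\overline{g})-R_{1}(f)R_{1}^{\prime }(\overline{g})=-\rho \,W\left( f,\overline{g};1\right) $, exactly as in (2.4)--(2.8). Incidentally, your jump relation $W\left( f,\overline{g};h_{2}-0\right) =\frac{\delta _{3}\delta _{4}}{\gamma _{3}\gamma _{4}}W\left( f,\overline{g};h_{2}+0\right) $ is the correct one; the paper's (2.7) displays the factor $\frac{\delta _{1}\delta _{2}\delta _{3}\delta _{4}}{\gamma _{1}\gamma _{2}\gamma _{3}\gamma _{4}}$, which is an evident typo since the conditions at $h_{2}$ involve only $\gamma _{3},\gamma _{4},\delta _{3},\delta _{4}$.
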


\begin{proof}
Let $F=\left(
\begin{array}{c}
f(x) \\
R_{1}^{\prime }(f)%
\end{array}%
\right) $ and $G=\left(
\begin{array}{c}
g(x) \\
R_{1}^{\prime }(g)%
\end{array}%
\right) $ be arbitrary elements of $D(A)$. Twice integrating by parts we find%
\begin{equation*}
\left\langle AF,G\right\rangle _{H}-\left\langle F,AG\right\rangle
_{H}=W\left( f,\overline{g};h_{1}-0\right) -W\left( f,\overline{g};-1\right)
\end{equation*}%
\begin{eqnarray}
&&+\frac{\delta _{1}\delta _{2}}{\gamma _{1}\gamma _{2}}\left( W\left( f,%
\overline{g};h_{2}-0\right) -W\left( f,\overline{g};h_{1}+0\right) \right)
\notag \\
&&+\frac{\delta _{1}\delta _{2}\delta _{3}\delta _{4}}{\gamma _{1}\gamma
_{2}\gamma _{3}\gamma _{4}}\left( W\left( f,\overline{g};1\right) -W\left( f,%
\overline{g};h_{2}+0\right) \right)  \notag \\
&&+\frac{\delta _{1}\delta _{2}\delta _{3}\delta _{4}}{\rho \gamma
_{1}\gamma _{2}\gamma _{3}\gamma _{4}}\left( R_{1}^{\prime }(f)R_{1}(%
\overline{g})-R_{1}(f)R_{1}^{\prime }(\overline{g})\right)  \TCItag{2.4}
\end{eqnarray}%
where, as usual, $W\left( f,g;x\right) $ denotes the Wronskian of $f$ and $g$%
; i.e.,%
\begin{equation*}
W\left( f,g;x\right) :=f(x)g^{\prime }(x)-f^{\prime }(x)g(x).
\end{equation*}%
Since $F,G\in D(A),$ the first components of these elements, i.e. $f$ and $g$
satisfy the boundary condition (1.2). From this fact we easily see that%
\begin{equation}
W\left( f,\overline{g};-1\right) =0,  \tag{2.5}
\end{equation}%
since $\cos \alpha $ and $\sin \alpha $ are real. Further, as $f$ and $g$
also satisfy both transmission conditions, we obtain%
\begin{equation}
W\left( f,\overline{g};h_{1}-0\right) =\frac{\delta _{1}\delta _{2}}{\gamma
_{1}\gamma _{2}}W\left( f,\overline{g};h_{1}+0\right)  \tag{2.6}
\end{equation}%
\begin{equation}
W\left( f,\overline{g};h_{2}-0\right) =\frac{\delta _{1}\delta _{2}\delta
_{3}\delta _{4}}{\gamma _{1}\gamma _{2}\gamma _{3}\gamma _{4}}W\left( f,%
\overline{g};h_{2}+0\right)  \tag{2.7}
\end{equation}%
Moreover, the direct calculations give%
\begin{equation}
R_{1}^{\prime }(f)R_{1}(\overline{g})-R_{1}(f)R_{1}^{\prime }(\overline{g}%
)=-\rho W\left( f,\overline{g};1\right)  \tag{2.8}
\end{equation}%
Now, inserting (2.5)-(2.8) in (2.4), we have%
\begin{equation*}
\left\langle AF,G\right\rangle _{H}=\left\langle F,AG\right\rangle _{H}\text{
\ \ \ \ }\left( F,G\in D(A\right)
\end{equation*}%
and so $A$ is symmetric.
\end{proof}

Recalling that the eigenvalues of (1.1)-(1.7) coincide with the eigenvalues
of $A$, we have the next corollary:

\begin{corollary}
All eigenvalues of (1.1)-(1.7) are real.
\end{corollary}

Since all eigenvalues are real it is enough to study only the real-valued
eigenfunctions. Therefore we can now assume that all eigenfunctions of
(1.1)-(1.7) are real-valued.

\section{\textbf{Asymptotic Formulas for Eigenvalues and Fundamental
Solutions}}

Let us define fundamental solutions%
\begin{equation*}
\phi \left( x,\lambda \right) =\left\{
\begin{array}{cc}
\phi _{1}\left( x,\lambda \right) , & x\in \left[ -1,h_{1}\right) , \\
\phi _{2}\left( x,\lambda \right) , & x\in \left( h_{1},h_{2}\right) , \\
\phi _{3}\left( x,\lambda \right) , & x\in \left( h_{2},1\right]%
\end{array}%
\right. \text{ and\ }\chi \left( x,\lambda \right) =\left\{
\begin{array}{cc}
\chi _{1}\left( x,\lambda \right) , & x\in \left[ -1,h_{1}\right) , \\
\chi _{2}\left( x,\lambda \right) , & x\in \left( h_{1},h_{2}\right) , \\
\chi _{3}\left( x,\lambda \right) , & x\in \left( h_{2},1\right]%
\end{array}%
\right.
\end{equation*}%
of (1.1) by the following procedure. We first consider the next
initial-value problem:%
\begin{equation}
-u^{\prime \prime }+q\left( x\right) u=\lambda \omega _{1}^{2}u,\ x\in \left[
-1,h_{1}\right]  \tag{3.1}
\end{equation}%
\begin{eqnarray}
u(-1) &=&\sin \alpha ,  \TCItag{3.2} \\
u^{\prime }(-1) &=&-\cos \alpha  \TCItag{3.3}
\end{eqnarray}%
By virtue of [14, Theorem 1.5] the problem (3.1)-(3.3) has a unique solution
$u=\phi _{1}\left( x,\lambda \right) $ which is an entire function of $%
\lambda \in
%TCIMACRO{\U{2102} }%
%BeginExpansion
\mathbb{C}
%EndExpansion
$ for each fixed $x\in \left[ -1,h_{1}\right] $. Similarly,

\begin{equation}
-u^{\prime \prime }+q\left( x\right) u=\lambda \omega _{2}^{2}u,\text{ \ }%
x\in \left[ h_{1},h_{2}\right]  \tag{3.4}
\end{equation}%
\begin{eqnarray}
u(h_{1}) &=&\frac{\gamma _{1}}{\delta _{1}}\phi _{1}\left( h_{1},\lambda
\right) ,  \TCItag{3.5} \\
u^{\prime }(h_{1}) &=&\frac{\gamma _{2}}{\delta _{2}}\phi _{1}^{\prime
}\left( h_{1},\lambda \right) ,  \TCItag{3.6}
\end{eqnarray}%
has a unique solution $u=\phi _{2}\left( x,\lambda \right) $ which is an
entire function of $\lambda \in
%TCIMACRO{\U{2102} }%
%BeginExpansion
\mathbb{C}
%EndExpansion
$ for each fixed $x\in \left[ h_{1},h_{2}\right] $. Continuing in this manner

\begin{equation}
-u^{\prime \prime }+q\left( x\right) u=\lambda \omega _{3}^{2}u,\text{ \ }%
x\in \left[ h_{2},1\right]  \tag{3.7}
\end{equation}%
\begin{eqnarray}
u(h_{2}) &=&\frac{\gamma _{3}}{\delta _{3}}\phi _{2}\left( h_{2},\lambda
\right) ,  \TCItag{3.8} \\
u^{\prime }(h_{2}) &=&\frac{\gamma _{4}}{\delta _{4}}\phi _{2}^{\prime
}\left( h_{2},\lambda \right) ,  \TCItag{3.9}
\end{eqnarray}%
has a unique solution $u=\phi _{3}\left( x,\lambda \right) $ which is an
entire function of $\lambda \in
%TCIMACRO{\U{2102} }%
%BeginExpansion
\mathbb{C}
%EndExpansion
$ for each fixed $x\in \left[ h_{2},1\right] $. Slightly \ modifying the
method of \ [2, Theorem 1.5] we can prove that the initial-value problem

\begin{equation}
-u^{\prime \prime }+q\left( x\right) u=\lambda \omega _{3}^{2}u,\ x\in \left[
h_{2},1\right]  \tag{3.10}
\end{equation}%
\begin{eqnarray}
u(1) &=&\beta _{2}^{\prime }\lambda +\beta _{2},  \TCItag{3.11} \\
u^{\prime }(1) &=&\beta _{1}^{\prime }\lambda +\beta _{1}  \TCItag{3.12}
\end{eqnarray}%
(3.10)-(3.13) has a unique solution $u=\chi _{3}\left( x,\lambda \right) $
which is an entire function of spectral parameter $\lambda \in
%TCIMACRO{\U{2102} }%
%BeginExpansion
\mathbb{C}
%EndExpansion
$ for each fixed $x\in \left[ h_{2},1\right] $. Similarly,

\begin{equation}
-u^{\prime \prime }+q\left( x\right) u=\lambda \omega _{2}^{2}u,\text{ \ }%
x\in \left[ h_{1},h_{2}\right]  \tag{3.13}
\end{equation}%
\begin{eqnarray}
u(h_{2}) &=&\frac{\delta _{3}}{\gamma _{3}}\chi _{3}\left( h_{2},\lambda
\right) ,  \TCItag{3.14} \\
u^{\prime }(h_{2}) &=&\frac{\delta _{4}}{\gamma _{4}}\chi _{3}^{\prime
}\left( h_{2},\lambda \right) ,  \TCItag{3.15}
\end{eqnarray}%
has a unique solution $u=\chi _{2}\left( x,\lambda \right) $ which is an
entire function of $\lambda \in
%TCIMACRO{\U{2102} }%
%BeginExpansion
\mathbb{C}
%EndExpansion
$ for each fixed $x\in \left[ h_{1},h_{2}\right] $. Continuing in this manner

\begin{equation}
-u^{\prime \prime }+q\left( x\right) u=\lambda \omega _{3}^{2}u,\text{ \ }%
x\in \left[ -1,h_{1}\right]  \tag{3.16}
\end{equation}%
\begin{eqnarray}
u(h_{1}) &=&\frac{\delta _{1}}{\gamma _{1}}\chi _{2}\left( h_{1},\lambda
\right) ,  \TCItag{3.17} \\
u^{\prime }(h_{1}) &=&\frac{\delta _{2}}{\gamma _{2}}\chi _{2}^{\prime
}\left( h_{1},\lambda \right) ,  \TCItag{3.18}
\end{eqnarray}%
has a unique solution $u=\chi _{1}\left( x,\lambda \right) $ which is an
entire function of $\lambda \in
%TCIMACRO{\U{2102} }%
%BeginExpansion
\mathbb{C}
%EndExpansion
$ for each fixed $x\in \left[ -1,h_{1}\right] $.

By virtue of (3.2) and (3.3) the solution $\phi \left( x,\lambda \right) $
satisfies the first boundary condition (1.2). Moreover, by (3.5), (3.6),
(3.8) and (3.9), $\phi \left( x,\lambda \right) $ satisfies also
transmission conditions (1.4)-(1.7). Similarly, by (3.11), (3.12), (3.14),
(3.15), (3.17) and (3.18) the other solution $\chi \left( x,\lambda \right) $
satisfies the second boundary condition (1.3) and transmission conditions
(1.4)-(1.7). It is well-known from the theory of ordinary differential
equations that each of the Wronskians $\Delta _{1}\left( \lambda \right)
=W\left( \phi _{1}\left( x,\lambda \right) ,\chi _{1}\left( x,\lambda
\right) \right) ,$ $\Delta _{2}\left( \lambda \right) =W\left( \phi
_{2}\left( x,\lambda \right) ,\chi _{2}\left( x,\lambda \right) \right) $
and $\Delta _{3}\left( \lambda \right) =W\left( \phi _{3}\left( x,\lambda
\right) ,\chi _{3}\left( x,\lambda \right) \right) $ are independent of $x$
in $\left[ -1,h_{1}\right] ,$ $\left[ h_{1},h_{2}\right] $ and $\left[
h_{2},1\right] $ respectively.

\begin{lemma}
The equality $\Delta _{1}\left( \lambda \right) =\frac{\delta _{1}\delta _{2}%
}{\gamma _{1}\gamma _{2}}\Delta _{2}\left( \lambda \right) =\frac{\delta
_{1}\delta _{2}\delta _{3}\delta _{4}}{\gamma _{1}\gamma _{2}\gamma
_{3}\gamma _{4}}\Delta _{3}\left( \lambda \right) $ holds for each $\lambda
\in
%TCIMACRO{\U{2102} }%
%BeginExpansion
\mathbb{C}
%EndExpansion
$.
\end{lemma}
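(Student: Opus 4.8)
The plan is to exploit the fact, just recorded above, that each $\Delta_i(\lambda)$ is independent of $x$ on its own interval. This freedom lets me evaluate each Wronskian at whichever endpoint I please, and the natural choice is the transmission point where the adjacent fundamental solutions are glued together. I would prove the two equalities separately—first $\Delta_1(\lambda)=\frac{\delta_1\delta_2}{\gamma_1\gamma_2}\Delta_2(\lambda)$ by evaluating at $x=h_1$, then $\Delta_2(\lambda)=\frac{\delta_3\delta_4}{\gamma_3\gamma_4}\Delta_3(\lambda)$ by evaluating at $x=h_2$—and finally chain them.

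For the first step I evaluate $\Delta_1=\phi_1(h_1)\chi_1'(h_1)-\phi_1'(h_1)\chi_1(h_1)$ and rewrite each of the four values in terms of the second-interval solutions. Inverting the initial conditions $(3.5)$--$(3.6)$ gives $\phi_1(h_1)=\frac{\delta_1}{\gamma_1}\phi_2(h_1)$ and $\phi_1'(h_1)=\frac{\delta_2}{\gamma_2}\phi_2'(h_1)$, while $(3.17)$--$(3.18)$ give directly $\chi_1(h_1)=\frac{\delta_1}{\gamma_1}\chi_2(h_1)$ and $\chi_1'(h_1)=\frac{\delta_2}{\gamma_2}\chi_2'(h_1)$. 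Substituting, both cross terms pick up the common factor $\frac{\delta_1\delta_2}{\gamma_1\gamma_2}$, which factors out to leave precisely $\frac{\delta_1\delta_2}{\gamma_1\gamma_2}\,W(\phi_2,\chi_2;h_1)=\frac{\delta_1\delta_2}{\gamma_1\gamma_2}\Delta_2(\lambda)$.

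The second equality is the identical computation carried out at $x=h_2$: inverting $(3.8)$--$(3.9)$ expresses $\phi_2,\phi_2'$ at $h_2$ through $\phi_3,\phi_3'$, and $(3.14)$--$(3.15)$ express $\chi_2,\chi_2'$ at $h_2$ through $\chi_3,\chi_3'$, each relation carrying the ratio $\frac{\delta_3}{\gamma_3}$ or $\frac{\delta_4}{\gamma_4}$. Factoring $\frac{\delta_3\delta_4}{\gamma_3\gamma_4}$ out of $\Delta_2=W(\phi_2,\chi_2;h_2)$ then yields $\frac{\delta_3\delta_4}{\gamma_3\gamma_4}\Delta_3(\lambda)$. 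Combining the two relations gives $\Delta_1(\lambda)=\frac{\delta_1\delta_2}{\gamma_1\gamma_2}\cdot\frac{\delta_3\delta_4}{\gamma_3\gamma_4}\Delta_3(\lambda)=\frac{\delta_1\delta_2\delta_3\delta_4}{\gamma_1\gamma_2\gamma_3\gamma_4}\Delta_3(\lambda)$, which is the assertion.

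Analytically there is no real difficulty here; the entire argument is a two-line computation once the right evaluation point is chosen. The only thing demanding care is the direction of the matching conditions. The solution $\phi$ is propagated left-to-right, so conditions $(3.5)$--$(3.6)$ and $(3.8)$--$(3.9)$ must be \emph{inverted} before use, converting the $\gamma/\delta$ ratios into $\delta/\gamma$; the solution $\chi$ is propagated right-to-left and its matching conditions $(3.17)$--$(3.18)$, $(3.14)$--$(3.15)$ already appear with $\delta/\gamma$ ratios. Because both functions thereby contribute the \emph{same} factor, the ratios combine multiplicatively rather than cancelling, and keeping this bookkeeping consistent is the sole place where a reciprocal error could creep in.
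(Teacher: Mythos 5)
Your proof is correct and takes essentially the same route as the paper: both arguments use the $x$-independence of the Wronskians to evaluate $\Delta_1$ at $x=h_1$ and $\Delta_2$ at $x=h_2$, then apply the matching conditions (with (3.5)--(3.6) and (3.8)--(3.9) inverted, and (3.14)--(3.15), (3.17)--(3.18) used directly) to extract the factors $\frac{\delta_1\delta_2}{\gamma_1\gamma_2}$ and $\frac{\delta_3\delta_4}{\gamma_3\gamma_4}$. Your ratio bookkeeping agrees with the paper's computation exactly.
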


\begin{proof}
Since the above Wronskians are independent of $x$, using (3.8), (3.9),
(3.11), (3.12), (3.14), (3.15), (3.17) and (3.18) we find%
\begin{eqnarray*}
\Delta _{1}\left( \lambda \right) &=&\phi _{1}\left( h_{1},\lambda \right)
\chi _{1}^{\prime }\left( h_{1},\lambda \right) -\phi _{1}^{\prime }\left(
h_{1},\lambda \right) \chi _{1}\left( h_{1},\lambda \right) \\
&=&\left( \frac{\delta _{1}}{\gamma _{1}}\phi _{2}\left( h_{1},\lambda
\right) \right) \left( \frac{\delta _{2}}{\gamma _{2}}\chi _{2}^{\prime
}\left( h_{1},\lambda \right) \right) -\left( \frac{\delta _{2}}{\gamma _{2}}%
\phi _{2}^{\prime }\left( h_{1},\lambda \right) \right) \left( \frac{\delta
_{1}}{\gamma _{1}}\chi _{2}\left( h_{1},\lambda \right) \right) \\
&=&\frac{\delta _{1}\delta _{2}}{\gamma _{1}\gamma _{2}}\Delta _{2}\left(
\lambda \right) =\left( \frac{\delta _{1}\delta _{3}}{\gamma _{1}\gamma _{3}}%
\phi _{3}\left( h_{2},\lambda \right) \right) \left( \frac{\delta _{2}\delta
_{4}}{\gamma _{2}\gamma _{4}}\chi _{3}^{\prime }\left( h_{2},\lambda \right)
\right)
\end{eqnarray*}%
\begin{equation*}
-\left( \frac{\delta _{2}\delta _{4}}{\gamma _{2}\gamma _{4}}\phi
_{3}^{\prime }\left( h_{2},\lambda \right) \right) \left( \frac{\delta
_{1}\delta _{3}}{\gamma _{1}\gamma _{3}}\chi _{3}\left( h_{2},\lambda
\right) \right) =\frac{\delta _{1}\delta _{2}\delta _{3}\delta _{4}}{\gamma
_{1}\gamma _{2}\gamma _{3}\gamma _{4}}\Delta _{3}\left( \lambda \right) .
\end{equation*}
\end{proof}

\begin{corollary}
The zeros of $\Delta _{1}\left( \lambda \right) ,$ $\Delta _{2}\left(
\lambda \right) $ and $\Delta _{3}\left( \lambda \right) $ coincide.
\end{corollary}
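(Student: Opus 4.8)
The plan is to deduce this at once from the preceding Lemma, which for every $\lambda \in \mathbb{C}$ supplies the chain of equalities
\[
\Delta_{1}\left(\lambda\right)=\frac{\delta_{1}\delta_{2}}{\gamma_{1}\gamma_{2}}\Delta_{2}\left(\lambda\right)=\frac{\delta_{1}\delta_{2}\delta_{3}\delta_{4}}{\gamma_{1}\gamma_{2}\gamma_{3}\gamma_{4}}\Delta_{3}\left(\lambda\right).
\]
Each of the two proportionality factors is a product and quotient of the coefficients $\gamma_{i}$ and $\delta_{i}$ from the transmission conditions (1.4)--(1.7), so the whole argument reduces to checking that these factors are nonzero.

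First I would record that $\gamma_{i}\neq 0$ and $\delta_{i}\neq 0$ for every $i$. This is forced by the standing conventions of the paper: these constants appear in the denominators of the inner product $\left\langle \cdot,\cdot\right\rangle_{H}$ and in the denominators of the initial data (3.5)--(3.6), (3.8)--(3.9), (3.14)--(3.15), (3.17)--(3.18) defining the fundamental solutions $\phi$ and $\chi$, so none of them may vanish if the problem is to be well posed. Consequently $\tfrac{\delta_{1}\delta_{2}}{\gamma_{1}\gamma_{2}}\neq 0$ and $\tfrac{\delta_{1}\delta_{2}\delta_{3}\delta_{4}}{\gamma_{1}\gamma_{2}\gamma_{3}\gamma_{4}}\neq 0$, i.e. the scalars relating the three Wronskians are invertible.

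The conclusion then follows immediately, since multiplication by a nonzero scalar preserves the zero set of an entire function. Thus for any fixed $\lambda_{0}\in\mathbb{C}$ one has $\Delta_{1}\left(\lambda_{0}\right)=0$ if and only if $\Delta_{2}\left(\lambda_{0}\right)=0$ if and only if $\Delta_{3}\left(\lambda_{0}\right)=0$, so the three functions share precisely the same zeros (in fact with the same multiplicities, as they differ only by nonvanishing constant factors). I expect no genuine obstacle here: the entire substance of the statement is already contained in the Lemma, and the sole point deserving explicit mention is the nonvanishing of the coefficients, which is exactly what makes the proportionality constants invertible.
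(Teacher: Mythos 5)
Your proposal is correct and takes the same route as the paper, which states this corollary without separate proof precisely because it follows immediately from Lemma 3.1 once one notes that the factors $\frac{\delta _{1}\delta _{2}}{\gamma _{1}\gamma _{2}}$ and $\frac{\delta _{1}\delta _{2}\delta _{3}\delta _{4}}{\gamma _{1}\gamma _{2}\gamma _{3}\gamma _{4}}$ are nonzero constants. Your explicit remark that the $\gamma _{i}$, $\delta _{i}$ must be nonzero (being forced by their appearance in the denominators of the inner product and of the initial conditions defining the fundamental solutions) simply makes precise what the paper leaves implicit.
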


In view of Lemma 3.1 we denote $\Delta _{1}\left( \lambda \right) ,$ $\frac{%
\delta _{1}\delta _{2}}{\gamma _{1}\gamma _{2}}\Delta _{2}\left( \lambda
\right) $ and $\frac{\delta _{1}\delta _{2}\delta _{3}\delta _{4}}{\gamma
_{1}\gamma _{2}\gamma _{3}\gamma _{4}}\Delta _{3}\left( \lambda \right) $\
by $\Delta \left( \lambda \right) $. Recalling the definitions of $\phi
_{i}\left( x,\lambda \right) $ and $\chi _{i}\left( x,\lambda \right) $, we
infer the next corollary.

\begin{corollary}
The function $\Delta \left( \lambda \right) $ is an entire function.
\end{corollary}

\begin{theorem}
The eigenvalues of (1.1)-(1.7) coincide with the zeros of $\Delta \left(
\lambda \right) $.
\end{theorem}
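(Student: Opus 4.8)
The plan is to show both inclusions between the eigenvalue set of the problem (1.1)--(1.7) and the zero set of $\Delta(\lambda)$. The natural bridge is the pair of fundamental solutions $\phi(x,\lambda)$ and $\chi(x,\lambda)$ already constructed: by their defining initial data, $\phi$ satisfies the left boundary condition (1.2) together with all four transmission conditions (1.4)--(1.7), while $\chi$ satisfies the right boundary condition (1.3) together with the same transmission conditions. Thus on each subinterval every solution of (1.1) satisfying the transmission conditions and one of the two boundary conditions is a scalar multiple of the corresponding restriction of $\phi$ (respectively $\chi$), since $\phi_i$ and $\chi_i$ form a fundamental system whenever their Wronskian $\Delta_i(\lambda)$ is nonzero.

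First I would prove that every zero of $\Delta(\lambda)$ is an eigenvalue. Suppose $\Delta(\lambda_0)=0$; by Corollary 3.2 this forces $\Delta_1(\lambda_0)=\Delta_2(\lambda_0)=\Delta_3(\lambda_0)=0$, so on each subinterval $\phi_i(\cdot,\lambda_0)$ and $\chi_i(\cdot,\lambda_0)$ are linearly dependent. I would then argue that the proportionality constants on the three pieces are forced to agree (up to the factors $\delta_i/\gamma_i$ dictated by the transmission conditions), so that $\phi(x,\lambda_0)$ is, in fact, a nonzero multiple of $\chi(x,\lambda_0)$ across the whole interval. Since $\phi$ already obeys (1.2) and (1.4)--(1.7) and $\chi$ obeys (1.3), the common solution satisfies all of (1.2)--(1.7). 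Because $\phi_1(-1,\lambda_0)=\sin\alpha$ and $\phi_1'(-1,\lambda_0)=-\cos\alpha$ cannot both vanish, $\phi\not\equiv 0$, so $\lambda_0$ is genuinely an eigenvalue with eigenfunction $\phi(\cdot,\lambda_0)$.

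Conversely, I would show that every eigenvalue is a zero of $\Delta$. If $\lambda_0$ is an eigenvalue with eigenfunction $u$, then $u$ satisfies (1.1) on each piece and all six side conditions. On $[-1,h_1]$, since $u$ satisfies (1.2) and $\phi_1$ is the (up to scalar) unique solution of (3.1) meeting the initial data derived from (1.2), we get $u=c\,\phi_1$ on $[-1,h_1)$ for some constant $c$; propagating through the transmission conditions gives $u=c\,\phi$ on all of $[-1,1]$. But $u$ also satisfies (1.3), and so does $c\,\chi$; if $\Delta(\lambda_0)\neq 0$ the solutions $\phi$ and $\chi$ would be linearly independent on, say, $[h_2,1]$, and the only solution simultaneously proportional to $\phi$ and satisfying the $\chi$-type condition (1.3) would be trivial, contradicting $u\not\equiv 0$. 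Hence $\Delta(\lambda_0)=0$.

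The main obstacle I anticipate is the bookkeeping in the first direction: verifying that the three subinterval proportionality constants are compatible, i.e.\ that a single global scalar multiple links $\phi$ and $\chi$ once $\Delta_1=\Delta_2=\Delta_3=0$. This requires invoking the transmission conditions (1.4)--(1.7) to transfer the proportionality across $x=h_1$ and $x=h_2$ and confirming the factors $\gamma_i/\delta_i$ cancel consistently. A clean way to avoid case analysis is to define the characteristic determinant directly from the boundary form $L_2$ applied to $\phi$, show $\Delta(\lambda)=R_1'(\phi)\lambda + R_1(\phi)$ equals $-\rho^{-1}W(\phi,\chi;1)\cdot(\text{const})$ up to the known normalizing factors, and read off that its vanishing is equivalent to $\phi$ satisfying (1.3); this reduces both inclusions to the single statement that $\phi$ is an eigenfunction exactly when it also meets the right boundary condition.
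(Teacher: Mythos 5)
Your proof is correct, but it takes a genuinely different route from the paper in both directions. For ``zero of $\Delta$ implies eigenvalue,'' the paper sidesteps your main anticipated obstacle entirely: instead of showing $\phi$ is a global multiple of $\chi$, it uses $\chi(\cdot,\lambda_0)$ itself as the eigenfunction. Since $\chi$ satisfies (1.3) and (1.4)--(1.7) by construction, only (1.2) needs checking, and that follows from the linear dependence of $\phi_1$ and $\chi_1$ on the \emph{first} subinterval alone; no propagation of proportionality constants across $h_1$ and $h_2$ is required. (Your propagation argument does work --- the transmission data match so that uniqueness for initial value problems carries the same constant $k$ across both interfaces --- but it is extra bookkeeping.) For ``eigenvalue implies zero of $\Delta$,'' the paper argues quite differently: it expands the eigenfunction as $c_1\phi_1+c_2\chi_1$, $c_3\phi_2+c_4\chi_2$, $c_5\phi_3+c_6\chi_3$ on the three subintervals, views $L_\nu(u_0)=0$, $\nu=1,\dots,6$, as a homogeneous $6\times 6$ linear system in the $c_i$, and asserts that its determinant equals $-\frac{(\delta_1\delta_2\delta_3\delta_4)^2}{\gamma_1\gamma_2\gamma_3\gamma_4}\Delta^4(\lambda_0)\neq 0$, forcing all $c_i=0$, a contradiction. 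Your alternative --- $u=c\phi_1$ on $[-1,h_1]$ because solutions satisfying (1.2) form a one-dimensional space, then $u=c\phi$ globally by transmission conditions and IVP uniqueness, then a contradiction with the linear independence of $\phi_3,\chi_3$ --- is more elementary and avoids the determinant computation, which the paper states without verification. Your closing suggestion is also sound and is essentially the identity the paper itself exploits later in the proof of Theorem 3.5: by (3.32), $\Delta_3(\lambda)=(\beta_1'\lambda+\beta_1)\phi_3(1,\lambda)-(\beta_2'\lambda+\beta_2)\phi_3'(1,\lambda)=L_2(\phi)$, so $\Delta$ vanishes precisely when $\phi$ satisfies (1.3); note only that the exact relation is this one, with no factor of $\rho^{-1}$ --- the identity (2.8) involving $\rho$ concerns the bilinear expression in two functions, not $L_2(\phi)$.
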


\begin{proof}
Let $\Delta \left( \lambda _{0}\right) =0.$ Then $W\left( \phi _{1}\left(
x,\lambda _{0}\right) ,\chi _{1}\left( x,\lambda _{0}\right) \right) =0$ for
all $x\in \left[ -1,h_{1}\right] .$ Consequently, the functions $\phi
_{1}\left( x,\lambda _{0}\right) $ and $\chi _{1}\left( x,\lambda
_{0}\right) $ are linearly dependent, i.e.,$\chi _{1}\left( x,\lambda
_{0}\right) =k\phi _{1}\left( x,\lambda _{0}\right) $, $x\in \left[ -1,h_{1}%
\right] $, for some $k\neq 0.$ By (3.2) and (3.3), from this equality, we
have%
\begin{equation*}
\cos \alpha \chi \left( -1,\lambda _{0}\right) +\sin \alpha \chi ^{\prime
}\left( -1,\lambda _{0}\right) =\cos \alpha \chi _{1}\left( -1,\lambda
_{0}\right) +\sin \alpha \chi _{1}^{\prime }\left( -1,\lambda _{0}\right)
\end{equation*}%
\begin{equation*}
=k\left( \cos \alpha \phi _{1}\left( -1,\lambda _{0}\right) +\sin \alpha
\phi _{1}^{\prime }\left( -1,\lambda _{0}\right) \right) =k\left( \cos
\alpha \sin \alpha +\sin \alpha \left( -\cos \alpha \right) \right) =0,
\end{equation*}%
and so $\chi \left( x,\lambda _{0}\right) $ satisfies the first boundary
condition (1.2). Recalling that the solution $\chi \left( x,\lambda
_{0}\right) $ also satisfies the other boundary condition (1.3) and
transmission conditions (1.4)-(1.7). We conclude that $\chi \left( x,\lambda
_{0}\right) $ is an eigenfunction of (1.1)-(1.7); i.e., $\lambda _{0}$ is an
eigenvalue. Thus, each zero of $\Delta \left( \lambda \right) $ is an
eigenvalue. Now let $\lambda _{0}$ be an eigenvalue and let $u_{0}\left(
x\right) $ be an eigenfunction with this eigenvalue. Suppose that $\Delta
\left( \lambda _{0}\right) \neq 0$. Whence $W\left( \phi _{1}\left(
x,\lambda _{0}\right) ,\chi _{1}\left( x,\lambda _{0}\right) \right) \neq 0$%
, $W\left( \phi _{2}\left( x,\lambda _{0}\right) ,\chi _{2}\left( x,\lambda
_{0}\right) \right) \neq 0$ and $W\left( \phi _{3}\left( x,\lambda
_{0}\right) ,\chi _{3}\left( x,\lambda _{0}\right) \right) \neq 0$. From
this, by virtue of the well-known properties of Wronskians, it follows that
each of the pairs $\phi _{1}\left( x,\lambda _{0}\right) ,$ $\chi _{1}\left(
x,\lambda _{0}\right) $; $\phi _{2}\left( x,\lambda _{0}\right) ,$ $\chi
_{2}\left( x,\lambda _{0}\right) $ and $\phi _{3}\left( x,\lambda
_{0}\right) ,$ $\chi _{3}\left( x,\lambda _{0}\right) $ is linearly
independent. Therefore, the solution $u_{0}(x)$ of (1.1) may be represented
as
\begin{equation*}
u_{0}\left( x\right) =\left\{
\begin{array}{c}
c_{1}\phi _{1}\left( x,\lambda _{0}\right) +c_{2}\chi _{1}\left( x,\lambda
_{0}\right) ,\text{ \ }x\in \left[ -1,h_{1}\right) , \\
c_{3}\phi _{2}\left( x,\lambda _{0}\right) +c_{4}\chi _{2}\left( x,\lambda
_{0}\right) ,\text{ \ }x\in \left( h_{1},h_{2}\right) , \\
c_{5}\phi _{3}\left( x,\lambda _{0}\right) +c_{6}\chi _{3}\left( x,\lambda
_{0}\right) ,\text{ \ }x\in \left( h_{2},1\right] ,%
\end{array}%
\right.
\end{equation*}%
where at least one of the coefficients $c_{i}$ $\left( i=\overline{1,6}%
\right) $ is not zero. Considering the true equalities%
\begin{equation}
L_{\upsilon }\left( u_{0}\left( x\right) \right) =0,\text{ \ \ }\upsilon =%
\overline{1,6},  \tag{3.19}
\end{equation}%
as the homogenous system of linear equations in the variables $c_{i}$ $%
\left( i=\overline{1,6}\right) $ and taking (3.5), (3.6), (3.8), (3.9),
(3.14), (3.15), (3.17) and (3.18) into account, we see that the determinant
of this system is equal to $-\frac{\left( \delta _{1}\delta _{2}\delta
_{3}\delta _{4}\right) ^{2}}{\gamma _{1}\gamma _{2}\gamma _{3}\gamma _{4}}%
\Delta ^{4}\left( \lambda _{0}\right) $ and so it does not vanish by
assumption. Consequently the system (3.19) has the only trivial solution $%
c_{i}=0$ $\left( i=\overline{1,6}\right) $. We thus get at a contradiction,
which completes the proof.
\end{proof}

\begin{theorem}
Let $\lambda =\mu ^{2}$ and $\func{Im}\mu =t$. Then the following asymptotic
equalities hold as $\left\vert \lambda \right\vert \rightarrow \infty :$

(1) In case $\sin \alpha \neq 0$%
\begin{equation}
\phi _{1}^{\left( k\right) }\left( x,\lambda \right) =\sin \alpha \frac{d^{k}%
}{dx^{k}}\cos \left[ \mu \omega _{1}\left( x+1\right) \right] +O\left( \frac{%
1}{\left\vert \mu \right\vert ^{1-k}}\exp \left( \left\vert t\right\vert
\omega _{1}\left( x+1\right) \right) \right) ,  \tag{3.20}
\end{equation}%
\begin{eqnarray}
\phi _{2}^{\left( k\right) }\left( x,\lambda \right) &=&\frac{\gamma _{1}}{%
\delta _{1}}\sin \alpha \frac{d^{k}}{dx^{k}}\cos \left[ \mu \left( \omega
_{2}x+\omega _{1}h_{1}+\omega _{1}\right) \right]  \notag \\
&&+O\left( \frac{1}{\left\vert \mu \right\vert ^{1-k}}\exp \left( \left\vert
t\right\vert \left( \omega _{2}x+\omega _{1}h_{1}+\omega _{1}\right) \right)
\right) ,  \TCItag{3.21}
\end{eqnarray}%
\begin{eqnarray}
\phi _{3}^{\left( k\right) }\left( x,\lambda \right) &=&\frac{\gamma
_{1}\gamma _{3}}{\delta _{1}\delta _{3}}\sin \alpha \frac{d^{k}}{dx^{k}}\cos %
\left[ \mu \left( \omega _{3}x+\omega _{2}h_{2}+\omega _{1}\right) \right]
\notag \\
&&+O\left( \frac{1}{\left\vert \mu \right\vert ^{1-k}}\exp \left( \left\vert
t\right\vert \left( \omega _{3}x+\omega _{2}h_{2}+\omega _{1}\right) \right)
\right) .  \TCItag{3.22}
\end{eqnarray}%
(1) In case $\sin \alpha =0$%
\begin{equation}
\phi _{1}^{\left( k\right) }\left( x,\lambda \right) =\frac{-1}{\mu \omega
_{1}}\cos \alpha \frac{d^{k}}{dx^{k}}\sin \left[ \mu \omega _{1}\left(
x+1\right) \right] +O\left( \frac{1}{\left\vert \mu \right\vert ^{2-k}}\exp
\left( \left\vert t\right\vert \omega _{1}\left( x+1\right) \right) \right) ,
\tag{3.23}
\end{equation}%
\begin{eqnarray}
\phi _{2}^{\left( k\right) }\left( x,\lambda \right) &=&-\frac{\gamma _{1}}{%
\mu \delta _{1}}\cos \alpha \frac{d^{k}}{dx^{k}}\sin \left[ \mu \left(
\omega _{2}x+\omega _{1}h_{1}+\omega _{1}\right) \right]  \notag \\
&&+O\left( \frac{1}{\left\vert \mu \right\vert ^{2-k}}\exp \left( \left\vert
t\right\vert \left( \omega _{2}x+\omega _{1}h_{1}+\omega _{1}\right) \right)
\right) ,  \TCItag{3.24}
\end{eqnarray}%
\begin{eqnarray}
\phi _{3}^{\left( k\right) }\left( x,\lambda \right) &=&-\frac{\gamma
_{1}\gamma _{3}}{\mu \delta _{1}\delta _{3}}\cos \alpha \frac{d^{k}}{dx^{k}}%
\sin \left[ \mu \left( \omega _{3}x+\omega _{2}h_{2}+\omega _{1}\right) %
\right]  \notag \\
&&+O\left( \frac{1}{\left\vert \mu \right\vert ^{2-k}}\exp \left( \left\vert
t\right\vert \left( \omega _{3}x+\omega _{2}h_{2}+\omega _{1}\right) \right)
\right) .  \TCItag{3.25}
\end{eqnarray}%
for $k=0$ and $k=1$. Moreover, each of these asymptotic equalities holds
uniformly for $x$.
\end{theorem}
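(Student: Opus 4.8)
The plan is to derive, on each subinterval, a Volterra integral equation for the relevant fundamental solution and then estimate the integral term by successive approximation. On $[-1,h_1]$ equation (3.1) reads $\phi_1''+\mu^2\omega_1^2\phi_1=q(x)\phi_1$; treating the right-hand side as an inhomogeneity and using the initial data (3.2)--(3.3), variation of parameters yields
$$\phi_1(x,\lambda)=\sin\alpha\,\cos[\mu\omega_1(x+1)]-\frac{\cos\alpha}{\mu\omega_1}\sin[\mu\omega_1(x+1)]+\frac{1}{\mu\omega_1}\int_{-1}^{x}\sin[\mu\omega_1(x-y)]\,q(y)\,\phi_1(y,\lambda)\,dy.$$
First I would record the elementary bounds $|\cos z|,|\sin z|\le e^{|\operatorname{Im}z|}$ together with $|\operatorname{Im}[\mu\omega_1(x-y)]|\le|t|\omega_1(x+1)$ for $-1\le y\le x$, and then run the standard successive-approximation (equivalently, a Gronwall) argument on this Volterra equation to show that the integral term is $O\!\left(|\mu|^{-1}e^{|t|\omega_1(x+1)}\right)$ uniformly in $x$. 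In the case $\sin\alpha\ne0$ the first term dominates while the $\cos\alpha$ term is itself $O(|\mu|^{-1})$, giving (3.20) for $k=0$; in the case $\sin\alpha=0$ the surviving term is the $\sin$ term of order $|\mu|^{-1}$, giving (3.23). Differentiating the integral equation in $x$ and repeating the estimates produces the $k=1$ formulas, the extra factor of $\mu$ from the derivative accounting for the shift from $|\mu|^{-1}$ to $|\mu|^{0}$ (resp. $|\mu|^{-2}$ to $|\mu|^{-1}$) in the error.

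Next I would propagate these asymptotics across the transmission points. The initial data (3.5)--(3.6) express $\phi_2(h_1)$ and $\phi_2'(h_1)$ through $\phi_1(h_1,\lambda)$ and $\phi_1'(h_1,\lambda)$, whose leading terms are already known from the first step. Writing the analogous Volterra equation for $\phi_2$ on $[h_1,h_2]$ with homogeneous solutions $\cos[\mu\omega_2(x-h_1)]$ and $\sin[\mu\omega_2(x-h_1)]$ and inserting these data, the leading part of $\phi_2$ is a linear combination of $\cos[\mu\omega_1(h_1+1)]\cos[\mu\omega_2(x-h_1)]$ and $\sin[\mu\omega_1(h_1+1)]\sin[\mu\omega_2(x-h_1)]$. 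Applying the angle-addition identity collapses this into the single cosine in (3.21), carrying the accumulated amplitude $\frac{\gamma_1}{\delta_1}\sin\alpha$. One further application of the same mechanism at $x=h_2$, using (3.8)--(3.9), yields (3.22) with amplitude $\frac{\gamma_1\gamma_3}{\delta_1\delta_3}\sin\alpha$; the cases $\sin\alpha=0$, namely (3.24)--(3.25), follow identically starting from (3.23).

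The step I expect to be the main obstacle is the trigonometric recombination in the propagation, together with the bookkeeping of error orders through the chaining. Combining the $\cos\cos$ and $\sin\sin$ products into one cosine at the stated amplitude requires the coefficients carried by the value- and derivative-transmission conditions to align correctly, and one must check that the $O$-terms inherited from $\phi_1$, amplified by the factor $\mu$ present in $\phi_1'(h_1)$, together with the fresh Volterra error on $[h_1,h_2]$, still combine to $O\!\left(|\mu|^{-(1-k)}e^{|t|(\omega_2x+\omega_1h_1+\omega_1)}\right)$ uniformly in $x$. Once this first transition is under control, the transition at $h_2$ is a verbatim repetition, so the essential difficulty is concentrated in getting the first propagation and its error estimate exactly right.
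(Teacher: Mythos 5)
Your strategy---a Volterra integral equation on each subinterval, successive-approximation/Gronwall bounds for the integral terms, and propagation of the initial data across $h_1$ and $h_2$---is essentially the paper's own. The difference is only one of economy: the paper does not prove (3.20), (3.21), (3.23), (3.24) at all, citing [18, Lemma 1.7] for $\phi_1$ and [12, Theorem 3.2] for $\phi_2$, and it runs your scheme only for $\phi_3$: it writes the integral equation (3.26), inserts (3.21) to obtain (3.27), proves $\phi_3(x,\lambda)=O\left(\exp\left(|t|(\omega_3x+\omega_2h_2+\omega_1)\right)\right)$ by a maximum-modulus estimate (your Gronwall step), bootstraps this bound through the integral term to get (3.22) for $k=0$, and differentiates for $k=1$.

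The step you flag as the main obstacle is, however, a genuine gap, and it does not close. Write $A=\mu\omega_1(h_1+1)$, $B=\mu\omega_2(x-h_1)$. Inserting $\phi_1(h_1,\lambda)\sim\sin\alpha\cos A$ and $\phi_1'(h_1,\lambda)\sim-\mu\omega_1\sin\alpha\sin A$ into your Volterra representation for $\phi_2$ gives the leading term
\[
\sin\alpha\left(a\cos A\cos B-b\sin A\sin B\right)=\sin\alpha\left(\tfrac{a+b}{2}\cos(A+B)+\tfrac{a-b}{2}\cos(A-B)\right),
\]
where $a=\gamma_1/\delta_1$ and $b=\gamma_2\omega_1/(\delta_2\omega_2)$. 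This collapses to the single cosine of (3.21) with amplitude $(\gamma_1/\delta_1)\sin\alpha$ only if $a=b$, i.e. $\delta_1\gamma_2\omega_1=\gamma_1\delta_2\omega_2$, a relation between the transmission coefficients and the weights that is nowhere among the hypotheses; for real $\mu$ the spurious term $\cos(A-B)$ is $O(1)$ and cannot be absorbed into the $O(1/|\mu|)$ error. (There is also a phase discrepancy: $A+B=\mu\left(\omega_2x+(\omega_1-\omega_2)h_1+\omega_1\right)$, not $\mu\left(\omega_2x+\omega_1h_1+\omega_1\right)$.) The same obstruction recurs at $h_2$ with $a=\gamma_3/\delta_3$, $b=\gamma_4\omega_2/(\delta_4\omega_3)$, and identically in the case $\sin\alpha=0$. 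You should be aware that the paper does not resolve this either: for $\phi_2$ the issue is buried in the citation of [12, Theorem 3.2], and for $\phi_3$ in the unexplained passage from (3.26) to (3.27), where the recombination is simply asserted. To complete your plan you must either impose the alignment conditions $\delta_1\gamma_2\omega_1=\gamma_1\delta_2\omega_2$ and $\delta_3\gamma_4\omega_2=\gamma_3\delta_4\omega_3$ (under which your computation closes and the stated formulas are correct), or replace the single-cosine leading terms in (3.21)--(3.22) and (3.24)--(3.25) by the two-cosine expressions displayed above.
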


\begin{proof}
Asymptotic formulas for $\phi _{1}\left( x,\lambda \right) $ and $\phi
_{2}\left( x,\lambda \right) $ are found in [18, Lemma 1.7] and [12, Theorem
3.2] respectively. But the formulas for $\phi _{3}\left( x,\lambda \right) $
need individual considerations, since this solution is defined by the
initial condition with some special nonstandart form. The initial-value
problem (3.7)-(3.9) can be transformed into the equivalent integral equation%
\begin{eqnarray}
u(x) &=&\frac{\gamma _{3}}{\delta _{3}}\phi _{2}\left( h_{2},\lambda \right)
\cos \mu \omega _{3}x+\frac{\gamma _{4}}{\mu \omega _{3}\delta _{4}}\phi
_{2}^{\prime }\left( h_{2},\lambda \right) \sin \mu \omega _{3}x  \notag \\
&&+\frac{\omega _{3}}{\mu }\int_{h_{2}}^{x}\sin \left[ \mu \omega _{3}\left(
x-y\right) \right] q\left( y\right) u\left( y\right) dy  \TCItag{3.26}
\end{eqnarray}%
Let $\sin \alpha \neq 0$. Inserting (3.21) in (3.26) we have%
\begin{eqnarray}
\phi _{3}\left( x,\lambda \right) &=&\frac{\gamma _{1}\gamma _{3}}{\delta
_{1}\delta _{3}}\sin \alpha \cos \left[ \mu \left( \omega _{3}x+\omega
_{2}h_{2}+\omega _{1}\right) \right]  \notag \\
&&+\frac{\omega _{3}}{\mu }\int_{h_{2}}^{x}\sin \left[ \mu \omega _{3}\left(
x-y\right) \right] q\left( y\right) \phi _{3}\left( y,\lambda \right) dy
\notag \\
&&+O\left( \frac{1}{\left\vert \mu \right\vert }\exp \left( \left\vert
t\right\vert \left( \omega _{3}x+\omega _{2}h_{2}+\omega _{1}\right) \right)
\right) .  \TCItag{3.27}
\end{eqnarray}%
Multiplying this by $\exp \left( -\left\vert t\right\vert \left( \omega
_{3}x+\omega _{2}h_{2}+\omega _{1}\right) \right) $ and denoting $%
F(x,\lambda )=\exp \left( -\left\vert t\right\vert \left( \omega
_{3}x+\omega _{2}h_{2}+\omega _{1}\right) \right) \phi _{3}\left( x,\lambda
\right) $, we have the next "asymptotic integral equation"%
\begin{eqnarray*}
F(x,\lambda ) &=&\frac{\gamma _{1}\gamma _{3}}{\delta _{1}\delta _{3}}\sin
\alpha \exp \left( -\left\vert t\right\vert \left( \omega _{3}x+\omega
_{2}h_{2}+\omega _{1}\right) \right) \cos \left[ \mu \left( \omega
_{3}x+\omega _{2}h_{2}+\omega _{1}\right) \right] \\
&&+\frac{\omega _{3}}{\mu }\int_{h_{2}}^{x}\sin \left[ \mu \omega _{3}\left(
x-y\right) \right] \exp \left( -\left\vert t\right\vert \omega _{3}\left(
x-y\right) \right) q\left( y\right) F(y,\lambda )dy+O\left( \frac{1}{\mu }%
\right) .
\end{eqnarray*}%
Putting $M(\lambda )=\max_{x\in \left[ h_{2},1\right] }\left\vert
F(x,\lambda )\right\vert $, from the last equation we derive that

\begin{equation*}
M(\lambda )\leq M_{0}\left( \left\vert \frac{\gamma _{1}\gamma _{3}}{\delta
_{1}\delta _{3}}\right\vert +\frac{1}{\mu }\right)
\end{equation*}%
for some $M_{0}>0$. Consequently, $M(\lambda )=O\left( 1\right) $ as $%
\left\vert \lambda \right\vert \rightarrow \infty $, and so $\phi _{3}\left(
x,\lambda \right) =O\left( \exp \left( \left\vert t\right\vert \left( \omega
_{3}x+\omega _{2}h_{2}+\omega _{1}\right) \right) \right) $ as $\left\vert
\lambda \right\vert \rightarrow \infty $. Inserting the integral term of
(3.27) yields (3.22) for $k=0$. The case $k=1$ of (3.22) follows at once on
differentiating (3.21) and making the same procedure as in the case $k=0$.
The proof of (3.25) is similar to that of (3.22).
\end{proof}

\begin{theorem}
Let $\lambda =\mu ^{2}$, $\mu =\sigma +it$. Then the following asymptotic
formulas hold for the eigenvalues of the boundary-value-transmission problem
(1.1)-(1.7):

Case 1: $\beta _{2}^{\prime }\neq 0$, $\sin \alpha \neq 0$%
\begin{equation}
\mu _{n}=\frac{\pi \left( n-1\right) }{\omega _{3}+\omega _{2}h_{2}+\omega
_{1}}+O\left( \frac{1}{n}\right) ,  \tag{3.28}
\end{equation}%
Case 2: $\beta _{2}^{\prime }\neq 0$, $\sin \alpha =0$%
\begin{equation}
\mu _{n}=\frac{\pi \left( n-\frac{1}{2}\right) }{\omega _{3}+\omega
_{2}h_{2}+\omega _{1}}+O\left( \frac{1}{n}\right) ,  \tag{3.29}
\end{equation}%
Case 3: $\beta _{2}^{\prime }=0$, $\sin \alpha \neq 0$%
\begin{equation}
\mu _{n}=\frac{\pi \left( n-\frac{1}{2}\right) }{\omega _{3}+\omega
_{2}h_{2}+\omega _{1}}+O\left( \frac{1}{n}\right) ,  \tag{3.30}
\end{equation}%
Case 4: $\beta _{2}^{\prime }=0$, $\sin \alpha =0$%
\begin{equation}
\mu _{n}=\frac{\pi n}{\omega _{3}+\omega _{2}h_{2}+\omega _{1}}+O\left(
\frac{1}{n}\right) ,  \tag{3.31}
\end{equation}
\end{theorem}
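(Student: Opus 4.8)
The plan is to locate the eigenvalues as the zeros of the characteristic function $\Delta(\lambda)$, which by Theorem 3.3 coincide with the eigenvalues of (1.1)-(1.7), and then to read off their asymptotics from the large-$\lambda$ behaviour of $\Delta$. The key observation is that, since each $\Delta_i(\lambda)$ is independent of $x$, we may evaluate $\Delta_3(\lambda)=W(\phi_3,\chi_3)$ at the endpoint $x=1$; inserting the initial data (3.11)-(3.12) for $\chi_3$ gives the closed form $\Delta_3(\lambda)=\phi_3(1,\lambda)(\beta_1'\lambda+\beta_1)-\phi_3'(1,\lambda)(\beta_2'\lambda+\beta_2)$, so that by Lemma 3.1 $\Delta(\lambda)=\frac{\delta_1\delta_2\delta_3\delta_4}{\gamma_1\gamma_2\gamma_3\gamma_4}\big[\lambda(\beta_1'\phi_3(1,\lambda)-\beta_2'\phi_3'(1,\lambda))+(\beta_1\phi_3(1,\lambda)-\beta_2\phi_3'(1,\lambda))\big]$. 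This expresses $\Delta$ entirely through $\phi_3(1,\lambda)$ and $\phi_3'(1,\lambda)$, for which Theorem 3.3 supplies explicit asymptotics.

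Next I would substitute the asymptotic formulas at $x=1$, writing $\Omega:=\omega_3+\omega_2h_2+\omega_1$ for the phase argument there, and in each of the four cases isolate the single dominant term. When $\sin\alpha\neq0$ one uses (3.22), so that $\phi_3(1,\lambda)\sim\frac{\gamma_1\gamma_3}{\delta_1\delta_3}\sin\alpha\,\cos\mu\Omega$ and $\phi_3'(1,\lambda)\sim-\mu\omega_3\frac{\gamma_1\gamma_3}{\delta_1\delta_3}\sin\alpha\,\sin\mu\Omega$; when $\sin\alpha=0$ one uses (3.25), which lowers each power of $\mu$ by one and interchanges $\cos\leftrightarrow\sin$. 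Comparing the orders in $\mu$ of the four products $\lambda\beta_1'\phi_3$, $\lambda\beta_2'\phi_3'$, $\beta_1\phi_3$, $\beta_2\phi_3'$ then singles out the leading term: in Case 1 it is $-\lambda\beta_2'\phi_3'(1,\lambda)\sim\mu^3\sin\mu\Omega$; in Cases 2 and 3 it is of order $\mu^2\cos\mu\Omega$; and in Case 4 it is of order $\mu\sin\mu\Omega$. Here Cases 3 and 4 use the standing hypothesis $\rho>0$: when $\beta_2'=0$ it forces $\beta_1'\neq0$, so the term $\lambda\beta_1'\phi_3(1,\lambda)$ is genuinely present and dominant. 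After factoring out the nonvanishing constant and the appropriate power of $\mu$, the equation $\Delta(\lambda)=0$ reduces in Cases 1 and 4 to $\sin\mu\Omega+O(1/\mu)=0$ and in Cases 2 and 3 to $\cos\mu\Omega+O(1/\mu)=0$.

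From here the leading asymptotics follow at once: the zeros of $\sin\mu\Omega$ lie at $\mu\Omega=\pi n$ and those of $\cos\mu\Omega$ at $\mu\Omega=\pi(n-\tfrac12)$, which after renormalising the enumeration index give precisely (3.28)-(3.31). To pin down the $O(1/n)$ remainder I would set $\mu_n=\frac{\pi n}{\Omega}+\delta_n$ (respectively with $n-\tfrac12$), insert this into the reduced equation, and expand the trigonometric factor about its zero, where for instance $\sin\mu_n\Omega=\pm\Omega\,\delta_n+O(\delta_n^2)$; balancing this against the $O(1/\mu)=O(1/n)$ perturbation forces $\delta_n=O(1/n)$. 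The existence and simplicity of exactly one zero in an $O(1/n)$-neighbourhood of each approximate zero (so that no eigenvalues are lost or created) is justified by a Rouch\'e argument comparing $\Delta$ with its dominant trigonometric term on small circles about the approximate zeros, the uniform-in-$x$ estimates of Theorem 3.3 being used to control the remainder on these contours.

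The main obstacle is precisely this last counting step. The asymptotic equalities (3.22)/(3.25) hold only as $|\lambda|\to\infty$, so one must work on contours escaping to infinity and verify that the error terms there are genuinely subordinate to the dominant term. The delicate point is that the dominant trigonometric factor is itself small exactly near its own zeros, so the Rouch\'e comparison must be carried out on circles whose radius is bounded below independently of $n$, yet small enough to enclose a single zero, and one must confirm that $|\sin\mu\Omega|$ (respectively $|\cos\mu\Omega|$) stays bounded away from zero on these circles uniformly in $n$. Once this is secured, the zeros of $\Delta$ correspond one-to-one with those of the dominant term for all large $n$, and the perturbation computation delivers the stated $O(1/n)$ bounds in all four cases.
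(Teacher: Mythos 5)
Your proposal is correct and follows essentially the same route as the paper: evaluating $\Delta_3(\lambda)$ at $x=1$ with the initial data (3.11)--(3.12) for $\chi_3$ to get the representation (3.32), inserting the asymptotics of $\phi_3^{(k)}(1,\lambda)$ from Theorem 3.4, and then applying Rouch\'e's theorem to match the zeros of $\Delta_3$ with those of the dominant trigonometric term before refining the correction to $O(1/n)$. The only differences are that the paper carries this out explicitly just for Case 1 (declaring the other cases similar), whereas you treat all four cases and usefully point out that the standing hypothesis $\rho>0$ guarantees $\beta_1'\neq 0$ when $\beta_2'=0$, which the paper leaves implicit.
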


\begin{proof}
Let us consider only the case 1. Putting $x=1$ in $\Delta _{3}\left( \lambda
\right) =\phi _{3}\left( x,\lambda \right) \chi _{3}^{\prime }\left(
x,\lambda \right) -\phi _{3}^{\prime }\left( x,\lambda \right) \chi
_{3}\left( x,\lambda \right) $ and inserting $\chi _{3}\left( 1,\lambda
\right) =\beta _{2}^{\prime }\lambda +\beta _{2},$ $\chi _{3}^{\prime
}\left( 1,\lambda \right) =\beta _{1}^{\prime }\lambda +\beta _{1}$ we have
the following representation for $\Delta _{3}\left( \lambda \right) $:%
\begin{equation}
\Delta _{3}\left( \lambda \right) =\left( \beta _{1}^{\prime }\lambda +\beta
_{1}\right) \phi _{3}\left( 1,\lambda \right) -\left( \beta _{2}^{\prime
}\lambda +\beta _{2}\right) \phi _{3}^{\prime }\left( 1,\lambda \right) .
\tag{3.32}
\end{equation}%
Putting $x=1$ in (3.22) and inserting the result in (3.32), we derive now
that%
\begin{eqnarray}
\Delta _{3}\left( \lambda \right) &=&\frac{\delta _{2}\delta _{4}}{\gamma
_{2}\gamma _{4}}\omega _{3}\beta _{2}^{\prime }\left( \sin \alpha \right)
\mu ^{3}\sin \left[ \mu \left( \omega _{3}+\omega _{2}h_{2}+\omega
_{1}\right) \right]  \notag \\
&&+O\left( \left\vert \mu \right\vert ^{2}\exp \left( 2\left\vert
t\right\vert \left( \omega +\omega _{2}h_{2}+\omega _{1}\right) \right)
\right) .  \TCItag{3.33}
\end{eqnarray}%
By applying the well-known Rouch\'{e} Theorem which asserts that if $f\left(
z\right) $ and $g(z)$ are analytic inside and on a closed contour $\Gamma $,
and $\left\vert g(z)\right\vert <\left\vert f(z)\right\vert $ on $\Gamma $
then $f(z)$ and $f(z)+g(z)$ have the same number of zeros inside $\Gamma $
provided that the zeros are counted with multiplicity on a sufficiently
large contour, it follows that $\Delta _{3}\left( \lambda \right) $ has the
same number of zeros inside the contour as the leading term in (3.33).
Hence, if $\lambda _{0}<\lambda _{1}<\lambda _{2}...$ are the zeros of $%
\Delta _{3}\left( \lambda \right) $ and $\mu _{n}^{2}=\lambda _{n},$ we have
\begin{equation}
\frac{\pi \left( n-1\right) }{\omega _{3}+\omega _{2}h_{2}+\omega _{1}}%
+\delta _{n}  \tag{3.34}
\end{equation}%
for sufficiently large $n$, where $\left\vert \delta _{n}\right\vert <\frac{%
\pi }{4\left( \omega _{3}+\omega _{2}h_{2}+\omega _{1}\right) }$ for
sufficiently large $n$. By putting in (3.33) we have $\delta _{n}=O\left(
\frac{1}{n}\right) $, and the proof is completed in Case 1. The proofs for
the other cases are similar.
\end{proof}

\begin{theorem}
The following asymptotic formulas hold for the eigenfunctions
\begin{equation*}
\phi _{\lambda _{n}}\left( x\right) =\left\{
\begin{array}{cc}
\phi _{1}\left( x,\lambda _{n}\right) , & x\in \left[ -1,h_{1}\right) , \\
\phi _{2}\left( x,\lambda _{n}\right) , & x\in \left( h_{1},h_{2}\right) ,
\\
\phi _{3}\left( x,\lambda _{n}\right) , & x\in \left( h_{2},1\right]%
\end{array}%
\right.
\end{equation*}%
of (1.1)-(1.7):

Case 1: $\beta _{2}^{\prime }\neq 0$, $\sin \alpha \neq 0$%
\begin{equation*}
\phi _{\lambda _{n}}\left( x\right) =\left\{
\begin{array}{c}
\sin \alpha \cos \left[ \frac{\omega _{1}\pi \left( n-1\right) \left(
x+1\right) }{\omega _{2}+\omega _{1}}\right] +O\left( \frac{1}{n}\right) ,%
\text{\ }x\in \left[ -1,h_{1}\right) , \\
\frac{\gamma _{1}}{\delta _{1}}\sin \alpha \cos \left[ \frac{\left( \omega
_{2}x+\omega _{1}h_{1}+\omega _{1}\right) \pi \left( n-1\right) }{\omega
_{2}+\omega _{1}h_{1}+\omega _{1}}\right] +O\left( \frac{1}{n}\right) ,\text{%
\ }x\in \left( h_{1},h_{2}\right) , \\
\frac{\gamma _{1}\gamma _{3}}{\delta _{1}\delta _{3}}\sin \alpha \cos \left[
\frac{\left( \omega _{3}x+\omega _{2}h_{2}+\omega _{1}\right) \pi \left(
n-1\right) }{\omega _{3}+\omega _{2}h_{2}+\omega _{1}}\right] +O\left( \frac{%
1}{n}\right) ,\text{\ }x\in \left( h_{2},1\right] .%
\end{array}%
\right.
\end{equation*}%
Case 2: $\beta _{2}^{\prime }\neq 0$, $\sin \alpha =0$%
\begin{equation*}
\phi _{\lambda _{n}}\left( x\right) =\left\{
\begin{array}{c}
-\frac{\omega _{1}+\omega _{2}}{\omega _{1}}\frac{\cos \alpha }{\pi \left( n-%
\frac{1}{2}\right) }\sin \left[ \frac{\omega _{1}\pi \left( n-\frac{1}{2}%
\right) \left( x+1\right) }{\omega _{2}+\omega _{1}}\right] +O\left( \frac{1%
}{n^{2}}\right) ,\text{\ }x\in \left[ -1,h_{1}\right) , \\
\frac{-\gamma _{1}}{\delta _{1}}\frac{\omega _{1}+\omega _{2}}{\omega _{1}}%
\frac{\cos \alpha }{\pi \left( n-\frac{1}{2}\right) }\sin \left[ \frac{%
\left( \omega _{2}x+\omega _{1}h_{1}+\omega _{1}\right) \pi \left( n-\frac{1%
}{2}\right) }{\omega _{2}+\omega _{1}h_{1}+\omega _{1}}\right] +O\left(
\frac{1}{n^{2}}\right) ,\text{ }x\in \left( h_{1},h_{2}\right) , \\
\frac{-\gamma _{1}\gamma _{3}}{\delta _{1}\delta _{3}}\frac{\omega
_{1}+\omega _{2}}{\omega _{1}}\frac{\cos \alpha }{\pi \left( n-\frac{1}{2}%
\right) }\sin \left[ \frac{\left( \omega _{3}x+\omega _{2}h_{2}+\omega
_{1}\right) \pi \left( n-\frac{1}{2}\right) }{\omega _{3}+\omega
_{2}h_{2}+\omega _{1}}\right] +O\left( \frac{1}{n^{2}}\right) ,\text{ }x\in
\left( h_{2},1\right] .%
\end{array}%
\right.
\end{equation*}%
Case 3: $\beta _{2}^{\prime }=0$, $\sin \alpha \neq 0$%
\begin{equation*}
\phi _{\lambda _{n}}\left( x\right) =\left\{
\begin{array}{c}
\sin \alpha \cos \left[ \frac{\omega _{1}\pi \left( n-\frac{1}{2}\right)
\left( x+1\right) }{\omega _{2}+\omega _{1}}\right] +O\left( \frac{1}{n}%
\right) ,\text{\ }x\in \left[ -1,h_{1}\right) , \\
\frac{\gamma _{1}}{\delta _{1}}\sin \alpha \cos \left[ \frac{\left( \omega
_{2}x+\omega _{1}h_{1}+\omega _{1}\right) \pi \left( n-\frac{1}{2}\right) }{%
\omega _{2}+\omega _{1}h_{1}+\omega _{1}}\right] +O\left( \frac{1}{n}\right)
,\text{\ }x\in \left( h_{1},h_{2}\right) , \\
\frac{\gamma _{1}\gamma _{3}}{\delta _{1}\delta _{3}}\sin \alpha \cos \left[
\frac{\left( \omega _{3}x+\omega _{2}h_{2}+\omega _{1}\right) \pi \left( n-%
\frac{1}{2}\right) }{\omega _{3}+\omega _{2}h_{2}+\omega _{1}}\right]
+O\left( \frac{1}{n}\right) ,\text{\ }x\in \left( h_{2},1\right] .%
\end{array}%
\right.
\end{equation*}%
Case 4: $\beta _{2}^{\prime }=0$, $\sin \alpha =0$%
\begin{equation*}
\phi _{\lambda _{n}}\left( x\right) =\left\{
\begin{array}{c}
-\frac{\omega _{1}+\omega _{2}}{\omega _{1}}\frac{\cos \alpha }{\pi n}\sin %
\left[ \frac{\omega _{1}\pi n\left( x+1\right) }{\omega _{2}+\omega _{1}}%
\right] +O\left( \frac{1}{n^{2}}\right) ,\text{\ }x\in \left[
-1,h_{1}\right) , \\
\frac{-\gamma _{1}}{\delta _{1}}\frac{\omega _{1}+\omega _{2}}{\omega _{1}}%
\frac{\cos \alpha }{\pi n}\sin \left[ \frac{\left( \omega _{2}x+\omega
_{1}h_{1}+\omega _{1}\right) \pi n}{\omega _{2}+\omega _{1}h_{1}+\omega _{1}}%
\right] +O\left( \frac{1}{n^{2}}\right) ,\text{ }x\in \left(
h_{1},h_{2}\right) , \\
\frac{-\gamma _{1}\gamma _{3}}{\delta _{1}\delta _{3}}\frac{\omega
_{1}+\omega _{2}}{\omega _{1}}\frac{\cos \alpha }{\pi n}\sin \left[ \frac{%
\left( \omega _{3}x+\omega _{2}h_{2}+\omega _{1}\right) \pi n}{\omega
_{3}+\omega _{2}h_{2}+\omega _{1}}\right] +O\left( \frac{1}{n^{2}}\right) ,%
\text{ }x\in \left( h_{2},1\right] .%
\end{array}%
\right.
\end{equation*}%
All these asymptotic formulas hold uniformly for $x.$
\end{theorem}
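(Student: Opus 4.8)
The plan is to obtain every eigenfunction formula by feeding the eigenvalue asymptotics of Theorem 3.6 into the fundamental-solution asymptotics of Theorem 3.5. The eigenfunction attached to $\lambda_n$ is, up to a constant, the solution $\phi(x,\lambda_n)$: by construction $\phi$ already satisfies (1.2) and all transmission conditions (1.4)--(1.7), and since $\Delta(\lambda_n)=0$ forces $\phi_i(\cdot,\lambda_n)$ and $\chi_i(\cdot,\lambda_n)$ to be proportional on each subinterval, $\phi$ also inherits the condition (1.3) that $\chi$ satisfies. Hence $\phi_{\lambda_n}$, whose three branches are $\phi_1,\phi_2,\phi_3$, is an eigenfunction, and its branches carry the uniform expansions (3.20)--(3.25). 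I would therefore treat the four parameter cases separately, and within each case handle the three intervals $[-1,h_1)$, $(h_1,h_2)$, $(h_2,1]$ via the corresponding branch.

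First I would record that, by Corollary 2.2, every eigenvalue is real; consequently, for all sufficiently large $n$ the number $\mu_n=\sqrt{\lambda_n}$ is real, so $t=\operatorname{Im}\mu_n=0$ and every exponential factor $\exp(|t|(\cdots))$ in (3.20)--(3.25) equals $1$. Combined with $\mu_n\sim cn$ from Theorem 3.6, this collapses the remainders in (3.20)--(3.22) to $O(1/n)$ and those in (3.23)--(3.25) to $O(1/n^{2})$.

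In Case 1 ($\beta_2'\neq 0$, $\sin\alpha\neq 0$) I would insert $\mu=\mu_n$ into the $k=0$ forms of (3.20)--(3.22). The one nontrivial step is passing from, say, $\cos[\mu_n\omega_1(x+1)]$ to the stated cosine whose argument is built from $\pi(n-1)/(\omega_3+\omega_2 h_2+\omega_1)$: writing $\mu_n=\frac{\pi(n-1)}{\omega_3+\omega_2 h_2+\omega_1}+O(1/n)$ and applying the mean value theorem to $\cos$, the resulting difference is bounded by $|O(1/n)|\,\omega_1(x+1)$, which is $O(1/n)$ \emph{uniformly} on the compact interval $[-1,h_1]$; the same substitution on the remaining two branches gives the other two rows, and Case 3 is identical with the half-integer expansion (3.30). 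For the cases $\sin\alpha=0$ (Cases 2 and 4) I would instead start from (3.23)--(3.25), which carry an explicit prefactor $1/\mu$. Here one extra move is required: besides linearizing the sine argument as above, one expands the prefactor $\frac{1}{\mu_n}=\frac{\omega_3+\omega_2 h_2+\omega_1}{\pi(n-\frac12)}+O(1/n^{2})$ (with $n$ in Case 4) and multiplies through; both the product of the leading prefactor with the $O(1/n^{2})$ remainder and the product of the $O(1/n^{2})$ prefactor-error with the bounded sine are absorbed into the claimed $O(1/n^{2})$ error.

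I expect the main obstacle to be bookkeeping rather than anything conceptual: keeping the error bounds uniform in $x$ across all three branches at once, and verifying that the $O(1/n)$ perturbation of $\mu_n$ inside each trigonometric argument is genuinely negligible at the stated order. Once the reality of the spectrum has been used to kill the exponential growth factors, the entire argument reduces to substituting Theorem 3.6 into Theorem 3.5 and linearizing the trigonometric functions by the mean value theorem.
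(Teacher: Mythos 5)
Your proposal is correct and follows essentially the same route as the paper: substitute the eigenvalue asymptotics of Theorem 3.6 into the fundamental-solution expansions of Theorem 3.5 (equations (3.20)--(3.25)), use reality of the spectrum to make the factors $\exp(|t|(\cdot))$ harmless, and linearize the trigonometric functions and (in Cases 2 and 4) the prefactor $1/\mu_n$ to absorb the $O(1/n)$ perturbation of $\mu_n$ into the stated remainders. The only step you gloss that the paper spells out is why $\lambda_n\geq 0$ for large $n$ (reality alone allows $\mu_n$ to be purely imaginary; the paper rules this out by putting $\lambda=-H$ in (3.33) to show the spectrum is bounded below), though this also follows from the form of the asymptotics in Theorem 3.6 that you invoke.
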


\begin{proof}
Let us consider only the Case 1. Inserting (3.22) in the integral term of
(3.27), we easily see that%
\begin{equation*}
\int_{h_{2}}^{x}\sin \left[ \mu \omega _{3}\left( x-y\right) \right] q\left(
y\right) \phi _{3}\left( y,\lambda \right) dy=O\left( \exp \left( \left\vert
t\right\vert \left( \omega _{3}x+\omega _{2}h_{2}+\omega _{1}\right) \right)
\right) .
\end{equation*}%
Inserting in (3.20) yields%
\begin{eqnarray}
\phi _{3}\left( x,\lambda \right) &=&\frac{\gamma _{1}\gamma _{3}}{\delta
_{1}\delta _{3}}\sin \alpha \cos \left[ \mu \left( \omega _{3}x+\omega
_{2}h_{2}+\omega _{1}\right) \right]  \notag \\
&&+O\left( \frac{1}{\left\vert \mu \right\vert }\exp \left\vert t\right\vert
\left( \omega _{3}x+\omega _{2}h_{2}+\omega _{1}\right) \right) .
\TCItag{3.35}
\end{eqnarray}%
We already know that all eigenvalues are real. Furthermore, putting $\lambda
=-H,$ $H>0$ in (3.33) we infer that $\omega \left( -H\right) \rightarrow
\infty $ as $H\rightarrow +\infty $, and so $\omega \left( -H\right) \neq 0$
for sufficiently large\ $R>0$. Consequently, the set of eigenvalues is
bounded below. Letting $\sqrt{\lambda _{n}}=\mu _{n}$ in (3.35) we now obtain%
\begin{equation*}
\phi _{3}\left( x,\lambda _{n}\right) =\frac{\gamma _{1}\gamma _{3}}{\delta
_{1}\delta _{3}}\sin \alpha \cos \left[ \mu _{n}\left( \omega _{3}x+\omega
_{2}h_{2}+\omega _{1}\right) \right] +O\left( \frac{1}{\mu _{n}}\right)
\end{equation*}%
since $t_{n}=lm\mu _{n}$ for sufficiently large $n$. After some calculation,
we easily see that%
\begin{equation*}
\cos \left[ \mu _{n}\left( \omega _{3}x+\omega _{2}h_{2}+\omega _{1}\right) %
\right] =\cos \left[ \frac{\left( \omega _{3}x+\omega _{2}h_{2}+\omega
_{1}\right) \pi \left( n-1\right) }{\omega _{3}+\omega _{2}h_{2}+\omega _{1}}%
\right] +O\left( \frac{1}{n}\right) .
\end{equation*}%
Consequently,%
\begin{equation*}
\phi _{3}\left( x,\lambda _{n}\right) =\frac{\gamma _{1}\gamma _{3}}{\delta
_{1}\delta _{3}}\sin \alpha \cos \left[ \frac{\left( \omega _{3}x+\omega
_{2}h_{2}+\omega _{1}\right) \pi \left( n-1\right) }{\omega _{3}+\omega
_{2}h_{2}+\omega _{1}}\right] +O\left( \frac{1}{n}\right) .
\end{equation*}%
In a similar method, we can deduce that%
\begin{equation*}
\phi _{2}\left( x,\lambda _{n}\right) =\frac{\gamma _{1}}{\delta _{1}}\sin
\alpha \cos \left[ \frac{\left( \omega _{2}x+\omega _{1}h_{1}+\omega
_{1}\right) \pi \left( n-1\right) }{\omega _{2}+\omega _{1}h_{1}+\omega _{1}}%
\right] +O\left( \frac{1}{n}\right) ,
\end{equation*}%
and%
\begin{equation*}
\phi _{1}\left( x,\lambda _{n}\right) =\sin \alpha \cos \left[ \frac{\omega
_{1}\pi \left( n-1\right) \left( x+1\right) }{\omega _{2}+\omega _{1}}\right]
+O\left( \frac{1}{n}\right) .
\end{equation*}%
Thus the proof of the theorem completed in Case 1. The proofs for the other
cases are similar.
\end{proof}

%%%%%%%%%%%%%%%%%%%%%%%%%%%%%%%%%%%%%%%%%%%%%%%%%%%%%%%%%%%%%%%%

%%%%%%%%%%%%%%%%%%%%%%%%%%%%%%%%%%%%%%%%%%%%%%%%%%%%%%%%%%%%%%%%%%%


\begin{thebibliography}{99}
\bibitem{Svt} M. Demirci , Z. Akdo\u{g}an, O. Sh. Mukhtarov, Asymptotic
behavior of eigenvalues and eigenfuctions of one discontinuous
boundary-value problem, Intenational Jounal of Computational Cognition 2(3)
(2004) 101--113.

\bibitem{} M. Kadakal, O. Sh. Mukhtarov, F.\c{S}. Muhtarov, Some spectral
problems of Sturm-Liouville problem with transmission conditions, Iranian
Journal of Science and Technology, 49(A2) (2005) 229-245.

\bibitem{} Z. Akdo\u{g}an, M. Demirci, O. Sh. Mukhtarov, Green function of
discontinuous boundary-value problem with transmission conditions.
Mathematical Models and Methods in Applied Sciences 30 (2007) 1719-1738.

\bibitem{} E. Tun\c{c}, O. Sh. Mukhtarov, Fundamental solutions and
eigenvalues of one boundary-value problem with transmission conditions,
Appl. Math. Comput. 157 (2004) 347--355.

\bibitem{} O. Sh. Mukhtarov, E. Tun\c{c}, Eigenvalue problems for
Sturm--Liouville equations with transmission conditions, Israel J. Math. 144
(2004) 367--380.

\bibitem{} O. Sh. Mukhtarov, M. Kadakal, F.\c{S}. Muhtarov, Eigenvalues and
normalized eigenfunctions of discontinuous Sturm--Liouville problem with
transmission conditions, Rep. Math. Phys. 54 (2004) 41--56.

\bibitem{} O. Sh. Mukhtarov, M. Kandemir, N. Kuruoglu, Distribution of
eigenvalues for the discontinuous boundary value problem with functional
manypoint conditions, Israel J. Math. 129 (2002) 143--156.

\bibitem{} O. Sh. Mukhtarov, S. Yakubov, Problems for ordinary differential
equations with transmission conditions, Appl. Anal. 81(5) (2002) 1033--1064.

\bibitem{} O. Sh. Mukhtarov, Discontinuous Boundary value problem with
spectral parameter in boundary conditions, Turkish J. Math., 18 (2) (1994)
183-192.

\bibitem{} O. Sh. Mukhtarov and M. Kadakal, On a Sturm-Liouville type
problem with discontinuous in two-points, Far East Journal of Applied
Mathematics, 19 (3) (2005) 337--352.

\bibitem{} M. Kadakal, O. Sh. Mukhtarov, Sturm-Liouville problems with
discontinuities at two points, Comput. Math. Appl.,54 (2007) 1367-1379.

\bibitem{} O. Sh. Mukhtarov and M. Kadakal, Some spectral properties of one
Sturm-Liouville type problem with discontinuous weight, \textit{Siberian
Mathematical Journal}, 46 (2005) 681-694.

\bibitem{} R. P. Gilbert and H. C. Howard, On the singularities of
Sturm-Liouville expansions: II, \textit{Applicable Analysis}, 2 (1972)
269-282.

\bibitem{} E. C. Titchmarsh, Eigenfunctions Expansion Associated with Second
Order Differential Equations. I, Oxford Univ. Press, London, 1962.

\bibitem{} J. Walter, Regular eigenvalue problems with eigenvalue parameter
in the boundary conditions, \textit{Math. Z.} 133 (1973) 301--312.

\bibitem{} D. B. Hinton, An expansion theorem for an eigenvalue problem with
eigenvalue parameter in the boundary condition, \textit{Quarterly Journal of
Mathematics} 30 (1979) 33--42.

\bibitem{} A. A. Shkalikov, Boundary value problems for ordinary
differential equations with a parameter in boundary condition, \textit{Trudy
Sem. Imeny I. G. Petrowsgo} 9 (1983) 190--229.

\bibitem{fu} C. T. Fulton, Two-point boundary value problems with eigenvalue
parameter contained in the boundary conditions, \textit{Proc. Roy. Soc.
Edinburgh Sect. }A 77 (1977) 293--308.

\bibitem{} S. Yakubov and Ya. Yakubov, Abel basis of root functions of
regular boundary value problems, \textit{Math. Nachr.} 197 (1999) 157--187.

\bibitem{} S. Yakubov and Ya. Yakubov, Differential-Operator Equations.
Ordinary and Partial Differential Equations, Chapman and Hall/CRC, Boca
Raton, 2000.

\bibitem{} E. \c{S}en and A. Bayramov, Calculation of eigenvalues and
eigenfunctions of a discontinuous boundary value problem with retarded
argument which contains a spectral parameter in the boundary condition,
\textit{Mathematical and Computer Modelling} 54 (2011) 3090-3097.

\bibitem{} A. N. Tikhonov and A. A. Samarskii, Equations of Mathematical
Physics, Pergamon, Oxford; New York,1963.

\bibitem{} P. A. Binding, P. J. Browne and B. A. Watson, Sturm-Liouville
problems with boundary conditions rationally dependent on the eigenparameter
II. \textit{J. Comput. Appl. Math.} 148 (2002) 147-169.

\bibitem{} Kh. R. Mamedov, On an inverse scattering problem for a
discontinuous Sturm-Liouville equation with a spectral parameter in the
boundary condition, \textit{Boundary Value Problems} 2010:171967 (2010) 17
pages.

\bibitem{} M. Ront\'{o} and A. M. Samoilenko, Numerical-Analytic Methods in
Theory of Boundary-Value Problems, World Scientific, Singapore (2000), 455 p.
\end{thebibliography}
\end{document}